\definecolor{lightgray}{gray}{0.9}
\newtheorem{theorem}{Theorem}[section]
\newtheorem{lemma}[theorem]{Lemma}
\newtheorem{proposition}[theorem]{Proposition}
\newtheorem{corollary}[theorem]{Corollary}
\theoremstyle{definition}
\theoremstyle{remark}
\newtheorem{remark}[theorem]{Remark}
\numberwithin{equation}{section}
\def\separa{\hbox to 14 truecm{\hrulefill}}
\author[P. Danchev]{Peter Danchev}
\address{Institute of Mathematics and Informatics, Bulgarian Academy of Sciences, 1113 Sofia, Bulgaria}
\email{danchev@math.bas.bg}
\thanks{The first author was partially supported by the BIDEB 2221 of T\"UB\'ITAK}
\author[E. Garc\'\i a]{Esther Garc\'\i a}
\address{Departamento de Matem\'{a}tica  Aplicada, Ciencia e Ingenier\'{\i}a de los Materiales y Tecnolog\'{\i}a
Electr\'onica,
Universidad Rey Juan Carlos, 28933 M\'{o}s\-to\-les (Madrid), Spain}
\thanks{The second and third authors were partially supported by Ayuda Puente 2023, URJC, and MTM2017-84194-P (AEI/FEDER, UE)}
\email{esther.garcia@urjc.es}
\author[M. G\'omez Lozano]{Miguel G\'omez Lozano}
\address{Departamento de \'Algebra, Geometr\'{\i}a y
Topolog\'{\i}a, Universidad de M\'alaga, 29071 M\'alaga, Spain}
\thanks{The three authors were partially supported by the Junta de Andaluc\'{\i}a FQM264.}
\email{miggl@uma.es}
\begin{document}

\title[Decompositions of Periodic Matrices into a Sum of Special Matrices]{Decompositions of Periodic Matrices into a Sum of Special Matrices}
\maketitle

\begin{abstract} We study the problem of when a periodic square matrix of order $n$ over an arbitrary field $\mathbb{F}$ is decomposable into the sum of a square-zero matrix and a torsion matrix, and show that this decomposition can always be obtained for matrices of rank at least $\frac n2$ when $\mathbb{F}$ is either a field of prime characteristic, or the field of rational numbers, or an algebraically closed field of zero characteristic. We also provide a counterexample to such a decomposition when $\mathbb{F}$ equals the field of the real numbers.
Moreover, we prove that each periodic square matrix over any field is a sum of an idempotent
matrix and a torsion matrix.

\end{abstract}


\bigskip

{\footnotesize \textit{Key words}: Periodic; idempotent; torsion; nilpotent; characteristic polynomial}

{\footnotesize \textit{2010 Mathematics Subject Classification}: 15A21, 15A24,  16U60}

\medskip

\section{Introduction}

The decomposition of matrices over an arbitrary field into the sum of some special elements, like nilpotent elements,
idempotent elements, potent elements, units, etc., was in the focus of many researchers for a long time (see, e.g.,
\cite{AT1}, \cite{AT2}, \cite{B}, \cite{BCDM}, \cite{BM}, \cite{DM}, \cite{Sh}, \cite{St1} and \cite{St2} and the bibliography cited therewith).

A square matrix $A$ is called {\it periodic} if $A^m=A^n$ for certain $m>n\ge 1$; when $n=1$, the matrix $A$ is called {\it $m$-potent} or just, for short, {\it potent}. Likewise, when $m=2$ and $n=1$, $A$ is known as an {\it idempotent} matrix. A major type of potent matrices are the so-called {\it torsion} matrices: invertible matrices $A$ with the property that $A^s={\rm Id}$ for some integer $s\geq 1$, where ${\rm Id}$ stands for the identity matrix. We also say that $A$ is a {\it nilpotent} matrix if there exists some integer $k>1$ such that $A^k=0$; if $k$ is the smallest number with this property, we call it {\it index of nilpotence}. In the case where $A^2=0$, the matrix $A$ is called {\it square-zero matrix}. It is obvious that both nilpotent and potent matrices are always periodic.

In \cite{DGL4}, we investigated the situation in which a square matrix of arbitrary size $n\geq 2$ over an arbitrary field could  be expressed as the sum of an invertible matrix and a  nilpotent matrix $N$ with $N^k$=0 for some fixed $k\geq 1$. It was shown there that this is always possible when the rank of the matrix is no less than $\frac{n}{k}$.

On the same vein, in \cite{DGL3} we showed that a nilpotent matrix of size $n$ over a division ring can be expressed as the sum of a torsion matrix and a square-zero matrix if, and only if, its rank is at least $\frac{n}{2}$. The key idea in \cite{DGL3} was to show that the torsion matrices satisfied certain characteristic polynomials. Extending that idea, in \cite{DGL5}, we studied when block matrices could be modified by adding a square-zero matrix to obtain prescribed characteristic polynomials. 
We shall freely use this fact in the sequel in order to decompose a periodic matrix into some special elements.

Throughout the text of the current article, we denote by $\mathbb{M}_n(R)$ the matrix ring consisting of all square matrices of size $n\times n$, where $n\geq 1$ is a natural number, over a ring $R$. As usual, the letters $\mathbb{F}$ and $\Delta$ are reserved to designate a field and a division ring, respectively.

\medskip

\section{Decomposing periodic matrices}

We start our work with the following preliminary comments.

\begin{remark}\label{periodicdescription}
A periodic matrix $A$ with $A^m=A^n$ for certain integers $m>n\ge 1$ clearly satisfies a polynomial of the form $x^{n}(x^{m-n}-1)$. Therefore, the elementary divisors of such matrix are of the form $x$, $x^k$ (with $k\ge 2$) and/or divisors of the polynomial $x^{m-n}-1$; thus, $A$ is similar to a matrix of the form
$$
\left(
  \begin{array}{c|c|c|c|c}
    {0}& 0&0&0&0\\
    \hline
    0& N_{k_1}&0&0 & 0 \\
    \hline
 0& 0&\ddots&0 & 0\\
    \hline
     0& 0&0&N_{k_s} & 0\\
    \hline
  0&0& 0& 0 & T \\
  \end{array}
\right),
$$
where ${0}$ in the left-upper block corresponds to the elementary divisors of the form $x$, the elements
$N_{k_1},\dots,N_{k_s}$ correspond to the elementary divisors of the form $x^{k_i}$ (with $k_i\ge 2$), and $T$ contains in its main diagonal the (invertible) companion matrices associated to factors of $x^{m-n}-1$ (so, one inspects that $T$ is a torsion matrix).
\end{remark}

Thanks to this decomposition into special blocks, surprisingly with no too many efforts, we can express each periodic matrix as the sum of an idempotent matrix and a torsion matrix. Concretely, the following is true:

\begin{proposition}\label{tmaproblem2} Every periodic matrix over a field is the sum of an idempotent matrix and a torsion matrix.
\end{proposition}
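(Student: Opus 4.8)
The plan is to lean entirely on the block decomposition of Remark~\ref{periodicdescription} and handle the three block types separately. The reduction is justified by two stability observations, which I would record first: being a sum of an idempotent and a torsion matrix is invariant under conjugation (a conjugate of an idempotent is idempotent, a conjugate of a torsion matrix is torsion), and it respects block-diagonal sums, since a block-diagonal matrix built from idempotent blocks is idempotent, and if each block $U_i$ satisfies $U_i^{s_i}=\mathrm{Id}$ then the block-diagonal matrix they form has order dividing $\mathrm{lcm}(s_i)$. Hence it suffices to express each of the blocks occurring in Remark~\ref{periodicdescription} — the zero block, each nilpotent Jordan block $N_k$ with $k\ge 2$, and the torsion block $T$ — as a sum of an idempotent and a torsion matrix, and then conjugate back.

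Next I would dispose of the two easy blocks. For the torsion block, write $T = 0 + T$: the zero matrix is idempotent and $T$ is torsion by construction. For a zero block $0_r$, I would write $0_r = \mathrm{Id}_r + (-\mathrm{Id}_r)$, where $\mathrm{Id}_r$ is idempotent and $(-\mathrm{Id}_r)^2 = \mathrm{Id}_r$ exhibits $-\mathrm{Id}_r$ as torsion; this is valid in every characteristic (in characteristic $2$ one simply has $-\mathrm{Id}_r = \mathrm{Id}_r$).

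The heart of the argument, and the step I expect to be the main obstacle, is the nilpotent Jordan block $N_k$. The idea is to hunt for an idempotent $E$ for which $U := N_k - E$ is torsion. I would take $E$ to be the rank-one matrix whose first column consists entirely of $1$'s and whose remaining columns vanish; that $E^2 = E$ is immediate from $E = \mathbf{c}\,\mathbf{e}_1^{t}$ with the top entry of $\mathbf{c}$ equal to $1$. The payoff is that $U = N_k - E$ is then a companion-type matrix (first column equal to $(-1,\dots,-1)$, ones on the superdiagonal) whose characteristic polynomial is $p(x) = x^k + x^{k-1} + \cdots + x + 1 = (x^{k+1}-1)/(x-1)$. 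By Cayley--Hamilton, $p(U) = 0$, and evaluating the polynomial identity $(x-1)p(x) = x^{k+1}-1$ at $U$ yields $U^{k+1} = \mathrm{Id}$, so $U$ is invertible of finite order, i.e.\ torsion. The delicate point I would verify carefully is that this argument is completely characteristic-free — in particular it does not break when the characteristic divides $k+1$ — because $U^{k+1}=\mathrm{Id}$ is deduced from a polynomial identity valid over every field, so no separability of $x^{k+1}-1$ nor any root-of-unity hypothesis on $\mathbb{F}$ is required; this is exactly what lets the statement hold over an arbitrary field.

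Finally I would reassemble: gathering the idempotent summands of all blocks into one block-diagonal idempotent and the torsion summands into one block-diagonal torsion matrix (torsion by the $\mathrm{lcm}$ remark above) expresses the block form of $A$ as an idempotent plus a torsion matrix, and conjugating by the similarity furnished by Remark~\ref{periodicdescription} transfers the decomposition to $A$ itself.
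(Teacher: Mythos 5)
Your proposal is correct and follows essentially the same route as the paper: the same reduction via Remark~\ref{periodicdescription}, the same decompositions $0=\mathrm{Id}+(-\mathrm{Id})$ and $T=0+T$, and the same rank-one idempotent plus companion-matrix splitting of each nilpotent Jordan block (yours is just the transposed version, first column of ones and superdiagonal, versus the paper's last column of ones and subdiagonal). Your explicit Cayley--Hamilton justification that $U^{k+1}=\mathrm{Id}$ holds over any field is exactly what the paper leaves implicit in calling that block ``torsion of index $k_i+1$.''
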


\begin{proof}
As explained in Remark \ref{periodicdescription} quoted above, any periodic matrix $A$ with $A^m=A^n$ for certain integers $m>n\ge 1$ is similar to a matrix of the form
$$
\left(
  \begin{array}{c|c|c|c|c}
    0& 0&0&0&0\\
    \hline
    0& N_1&0&0 & 0 \\
    \hline
 0& 0&\ddots&0 & 0\\
    \hline
     0& 0&0&N_s & 0\\
    \hline
  0&0& 0& 0 & T \\
  \end{array}
\right).
$$
Now, we can do the following two things:
\begin{itemize}

\item To express the left-upper block as $(0)=({\rm Id})+( -{\rm Id})$ (i.e., idempotent + torsion of order $2$).

\item To decompose each nilpotent component $N_{k_i}$ as

$$
N_{k_i}=\underbrace{\left(
          \begin{array}{cccc}
            0 & 0 & 0 & 1 \\
            0 & 0 & 0 & 1 \\
            \vdots  &    &   & \vdots \\
            0 & 0 & 0 & 1 \\
          \end{array}
        \right)}_{\rm idempotent}+\underbrace{\left(
                  \begin{array}{cccc}
                   0 & 0 & 0 & -1 \\
                    1 & 0 & 0 & -1 \\
                    0 & \ddots &  & \vdots \\
                    0 & 0 & 1 & -1 \\
                  \end{array}
                \right)}_{\hbox{ torsion of index } k_i+1},
$$

\medskip

\noindent where the last matrix is torsion, because it is the companion matrix of the polynomial
$x^{k_i}+x^{k_i-1}+\dots+1$, which is a factor of the polynomial $x^{k_i+1}-1$.

\item Write $T=(0)+T$ (i.e., idempotent + torsion).
\end{itemize}

\medskip

This concludes the proof after all.
\end{proof}

As a direct consequence, we yield the following.

\begin{corollary}\label{nilpotent2} Every nilpotent matrix over a division ring is the sum of an idempotent matrix and a torsion matrix.
\end{corollary}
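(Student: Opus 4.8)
The plan is to imitate the proof of Proposition \ref{tmaproblem2}, the only new ingredient being that we must now work over a division ring $\Delta$ rather than over a field, so the elementary-divisor machinery invoked in Remark \ref{periodicdescription} is not directly available. A nilpotent matrix $N\in\mathbb{M}_n(\Delta)$ with $N^k=0$ is of course periodic (for instance $N^{k+1}=N^k$), so it morally falls under Proposition \ref{tmaproblem2}; but since that statement is phrased over a field, I would instead reprove the canonical form by hand. First I would establish that every nilpotent operator on a finite-dimensional left vector space $V$ over $\Delta$ is similar to a direct sum of nilpotent Jordan blocks $N_{k_1},\dots,N_{k_s}$ (the shift matrices) together with a number of $1\times 1$ zero blocks; this is exactly the canonical form of Remark \ref{periodicdescription} but with the torsion part $T$ absent, since $0$ is the only eigenvalue of a nilpotent matrix.

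The heart of the matter, and the step I expect to require the most care, is precisely this Jordan decomposition over a division ring, because $\Delta[x]$ is no longer a principal ideal domain and the rational-canonical-form argument used over a field breaks down. I would circumvent this by the classical elementary argument that uses only the vector-space structure: setting $W=\operatorname{im}N$, which is $N$-invariant with $\dim_\Delta W<\dim_\Delta V$, one applies induction to obtain a Jordan basis of $W$ for $N|_W$, lifts the tops of the resulting strings to preimages in $V$, and extends a basis of $\Ker N\cap W$ to a basis of $\Ker N$. All of these operations (choosing preimages, passing to quotients, extending independent families to bases, counting ranks) remain valid over any division ring, so the procedure goes through and produces the desired block form $N=PMP^{-1}$, with $M$ block-diagonal in the $N_{k_i}$'s and the zero blocks.

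Having $M$ in this shape, I would then copy the block-wise decomposition from the proof of Proposition \ref{tmaproblem2} verbatim: write each zero block as $(0)=(\Id)+(-\Id)$, and each nilpotent block $N_{k_i}$ with $k_i\ge 2$ as an idempotent plus the companion matrix $C_i$ of $x^{k_i}+x^{k_i-1}+\dots+1$. One checks that $C_i$ is torsion exactly as before: since this polynomial has central integer coefficients, the factorization $x^{k_i+1}-1=(x-1)(x^{k_i}+\dots+1)$ survives substitution of $C_i$, giving $C_i^{k_i+1}=\Id$. This is the one place where I would explicitly confirm that leaving the field setting causes no harm, and it does not, precisely because the coefficients involved are central. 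Assembling the blocks yields $M=E_0+T_0$ with $E_0$ idempotent and $T_0$ block-diagonally torsion (of order the least common multiple of $2$ and the $k_i+1$). Finally, conjugating back, $N=(PE_0P^{-1})+(PT_0P^{-1})$ expresses $N$ as the sum of an idempotent matrix and a torsion matrix, since conjugation preserves both properties.
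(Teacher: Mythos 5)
Your proposal is correct and follows essentially the same route as the paper: reduce the nilpotent matrix to its Jordan canonical form over the division ring, then apply the block-wise idempotent-plus-torsion decomposition of Proposition~\ref{tmaproblem2}. The paper simply cites the Jordan form over $\Delta$ as classical and observes that this form has entries in the center $\mathrm{Z}(\Delta)$ (a field), so Proposition~\ref{tmaproblem2} applies as a black box, whereas you reprove the canonical form and re-verify the block decomposition over $\Delta$ directly via centrality of the coefficients --- the same argument, carried out in more detail.
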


\begin{proof} Knowing the classical fact that each nilpotent matrix over a division ring $\Delta$ is similar to its Jordan canonical form $J$, which is indeed a matrix over the field ${\rm Z}(\Delta)$, that is, the center of $\Delta$, we then may apply Proposition~\ref{tmaproblem2} to obtain the wanted decomposition.
\end{proof}

In this aspect, an important query which immediately arises is of whether or {\it not} Proposition~\ref{tmaproblem2} could be expanded to division rings, i.e., whether or {\it not} Corollary~\ref{nilpotent2} remains valid for periodic matrices. However, even for potent matrices over division rings the situation seems to be rather complicated.

\medskip

Inspired by this decomposition and trying to generalize our work \cite{DGL3}, where we decomposed nilpotent matrices of order $n$ and rank at least $\frac n2$ as the sum of a torsion matrix and a square-zero matrix, we pose the following question.

\medskip

{\bf Question:} {\it Is any periodic matrix representable as the sum of a torsion matrix and a square-zero matrix?}

\medskip

The rest of this section will be devoted to study this problem. To this target, we will make use of the description of periodic matrices in Remark \ref{periodicdescription} alluded to above and of our key result from \cite{DGL3}.

\begin{lemma}\label{nilpotent}\cite[Proposition 2.1]{DGL3}
If a nilpotent block $N_{k}$ of size $k\ge 2$ is followed by $s$ blocks corresponding to elementary divisors of the form $x$ such that $0\le s\le k-2$, then the resulting matrix can be decomposed into the sum of a torsion matrix and a square-zero matrix.
\end{lemma}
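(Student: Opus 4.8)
The plan is to recast the statement as the construction of a single square-zero matrix and to control the complementary summand through its characteristic polynomial. Write $M=N_k\oplus 0_s$ for the matrix under discussion, of size $n=k+s$ and rank $k-1$. Asking for $M=T+S$ with $T$ torsion and $S^2=0$ is the same as asking for a square-zero $S$ such that $T:=M-S$ is invertible with $T^r=\Id$ for some $r\geq 1$. The first thing I would record is the bookkeeping that makes the hypothesis $s\le k-2$ appear on its own. If $T=M-S$ is invertible then $\mathrm{rank}(S)=\mathrm{rank}(M-T)\ge \mathrm{rank}(T)-\mathrm{rank}(M)=n-(k-1)=s+1$, while any square-zero matrix satisfies $\mathrm{im}(S)\subseteq\ker(S)$ and hence $\mathrm{rank}(S)\le n/2$. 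Combining the two inequalities forces $s+1\le (k+s)/2$, i.e. exactly $s\le k-2$; moreover it tells me to look for an $S$ of the minimal admissible rank $s+1$, whose image must exactly repair the $(s+1)$-dimensional kernel of $M$ so as to restore invertibility.

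Next I would replace ``$T$ is torsion'' by a sufficient polynomial condition that is easy to target: if the characteristic polynomial of $T$ is a \emph{separable} product of cyclotomic polynomials $\Phi_d$ with $d$ coprime to $\operatorname{char}\FF$, then $T$ has distinct eigenvalues, all roots of unity, so $T$ is diagonalizable over $\overline{\FF}$ and satisfies $x^m-\nobreak1$ for $m=\mathrm{lcm}(d)$, hence is torsion. It therefore suffices to fix such a target polynomial $g(x)$ of degree $n$ (its precise choice may depend on $\operatorname{char}\FF$, e.g. $x^n-1$ when $\operatorname{char}\FF\nmid n$, and a product of distinct higher $\Phi_d$ otherwise) and to produce a square-zero $S$ of rank $s+1$ with $\det(x\Id-(M-S))=g(x)$. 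This is precisely the ``prescribe the characteristic polynomial by a square-zero perturbation'' mechanism.

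For the base case $s=0$ this is immediate: completing the single Jordan chain by the corner entry turns $N_k$ into the $k$-cycle $N_k+E_{k,1}$, a permutation matrix of order $k$, so that $S=-E_{k,1}$ is rank-one square-zero and $g=x^k-1$. For $s\ge 1$ I would build $S$ explicitly, and the point I would stress is that the naive idea—splicing everything into one long cycle—fails: inserting the $s$ extra trivial blocks into the chain forces two \emph{adjacent} correction edges, so the resulting $S$ acquires a length-two path and $S^2\neq 0$ (this is already visible, and unavoidable, in the tight case $s=k-2$). The remedy is to allow $S$ to have genuine linear-combination entries rather than $0/1$ monomials, so that the offending compositions cancel and $\mathrm{im}(S)\subseteq\ker(S)$ is restored; the $s+1$ free scalars of such an $S$ are then solved against the $s+1$ lower coefficients of $g$. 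Here the slack $k\ge s+2$ is exactly what guarantees that the Jordan chain is long enough to absorb the $s$ extra vectors and still leave room to arrange $\mathrm{im}(S)\subseteq\ker(S)$. I would carry this out either inductively, attaching one trivial block at a time and raising $\mathrm{rank}(S)$ by one, or by writing down a closed staircase form of $S$ and computing its characteristic polynomial directly.

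The main obstacle is the simultaneous enforcement of the two constraints on $S$: the bilinear condition $S^2=0$ (equivalently $\mathrm{im}(S)\subseteq\ker(S)$) and the prescribed separable, cyclotomic-product characteristic polynomial of $M-S$. Each constraint in isolation is easy to meet; it is their conjunction that requires the rank-at-least-$n/2$ hypothesis, and the crux of the argument is to check that the cancellations engineered to make $S^2=0$ are compatible with the coefficient matching that forces $\chi_{M-S}=g$. Verifying this compatibility—i.e. that the degrees of freedom left after imposing $S^2=0$ still suffice to hit a separable torsion target—is where the real work lies and where the bound $s\le k-2$ is genuinely used.
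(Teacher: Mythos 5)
Your proposal is not a proof but a plan whose decisive step is left undone, and that step is the entire content of the lemma. The peripheral observations are sound: the rank count showing that any such decomposition forces $\mathrm{rank}(S)\ge s+1\le n/2$, hence $s\le k-2$, is correct and nicely explains why the hypothesis is sharp; the reduction of ``$T$ torsion'' to ``the characteristic polynomial of $M-S$ divides $x^m-1$ for suitable $m$'' is correct (by Cayley--Hamilton one does not even need separability); and the case $s=0$ via $N_k+E_{k,1}$ being the cycle of order $k$ is fine. But for $1\le s\le k-2$ you never exhibit the matrix $S$: you state that a ``staircase'' $S$ with $s+1$ free scalars should exist, that the offending compositions violating $S^2=0$ can be cancelled, and that the remaining freedom can be solved against the low-order coefficients of the target polynomial --- and then you yourself concede that verifying this ``is where the real work lies.'' Parameter counting ($s+1$ scalars versus $s+1$ coefficients) is not an argument here: the conditions $S^2=0$ and $\chi_{M-S}=g$ are polynomial, not linear, in the entries of $S$, and their compatibility is precisely what must be established. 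The paper's own Remark~\ref{couterexamplechar0} shows that prescribing a torsion characteristic polynomial by a square-zero perturbation can be genuinely obstructed (there by a trace condition), so solvability cannot be waved through on degrees-of-freedom grounds.

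For comparison, the paper does not reprove this lemma at all; it imports it as \cite[Proposition~2.1]{DGL3}, where the proof consists of writing down an \emph{explicit} square-zero matrix $N$ and computing directly that $N_k\oplus 0_s+N$ has a characteristic polynomial dividing $x^m-1$ (equivalently, is similar to a torsion companion matrix), with the hypothesis $s\le k-2$ entering to make the Jordan chain long enough for the construction to close up with $N^2=0$. Your outline points in the same direction --- prescribe the characteristic polynomial by a square-zero perturbation --- but until the matrix $S$ is written down and both $S^2=0$ and the characteristic polynomial computation are checked, the lemma has not been proved.
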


The following statement deals with the special situation when $k$ blocks corresponding to elementary divisors of the form $x$ are combined with a torsion block of order $k$. It is rather straightforward, and so we omit its proof.

\begin{lemma}\label{maximal}
If $A\in\mathbb{M}_{2k}(\mathbb{F})$ is a block matrix of the form
$$
A=\left(
    \begin{array}{c|c}
      0 & 0 \\
      \hline
      0 & T \\
    \end{array}
  \right)
$$
for some $T\in\mathbb{M}_{k}(\mathbb{F})$, then
$$
N=\left(
    \begin{array}{c|c}
      -T & -T \\
      \hline
      T & T \\
    \end{array}
  \right)
$$
is a square-zero matrix and
$$
(A-N)^6=\left(
    \begin{array}{c|c}
      T^6 & 0 \\
      \hline
      0 & T^6 \\
    \end{array}
  \right).
$$
In particular, if $T$ is torsion, the matrix $A$ decomposes as the sum of a torsion matrix and a square-zero matrix.
\end{lemma}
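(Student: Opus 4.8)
The plan is to organize the whole argument around a single structural observation: all three matrices in play are Kronecker products of a fixed $2\times 2$ matrix with $T$. Writing $\otimes$ for the Kronecker product, I would record that
$$
A = E\otimes T, \qquad N = C\otimes T, \qquad A-N = B\otimes T,
$$
where $E=\begin{pmatrix} 0 & 0 \\ 0 & 1 \end{pmatrix}$, $C=\begin{pmatrix} -1 & -1 \\ 1 & 1 \end{pmatrix}$ and $B=E-C=\begin{pmatrix} 1 & 1 \\ -1 & 0 \end{pmatrix}$. The point of this reformulation is the mixed-product rule $(X\otimes T)(Y\otimes T)=(XY)\otimes T^2$, which by induction gives $(X\otimes T)^n = X^n\otimes T^n$ and thereby transfers every power computation to the $2\times 2$ setting.

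With this in hand the square-zero claim is immediate: $C$ has trace and determinant $0$, so $C^2=0$ by Cayley--Hamilton, whence $N^2 = C^2\otimes T^2 = 0$ (equivalently, one multiplies out the blocks and observes that each entry equals $T^2-T^2$).

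For the power of $A-N$ I would compute the characteristic polynomial of $B$, namely $\lambda^2-\lambda+1$ (trace $1$, determinant $1$), which is the same in every characteristic. This is the sixth cyclotomic polynomial, and since $\lambda^2-\lambda+1$ divides $\lambda^6-1$ as an identity in $\mathbb{Z}[\lambda]$, substituting $B$ and invoking Cayley--Hamilton yields $B^6=I_2$ over an arbitrary field $\mathbb{F}$; indeed $6$ is the exact multiplicative order of $B$, which explains that exponent. Consequently $(A-N)^6 = B^6\otimes T^6 = I_2\otimes T^6$, which is precisely the claimed block-diagonal matrix carrying $T^6$ in each diagonal block.

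The final assertion then follows at once: if $T$ is torsion with $T^s=I_k$, then $(A-N)^{6s} = I_2\otimes (T^s)^6 = I_{2k}$, so $A-N$ is an invertible torsion matrix and $A=(A-N)+N$ is the desired decomposition into a torsion matrix plus a square-zero matrix. I do not anticipate a genuine obstacle; the only delicate point is that $B^6=I_2$ must hold uniformly across all characteristics, and this is exactly why I would argue via the divisibility $\lambda^2-\lambda+1\mid\lambda^6-1$ in $\mathbb{Z}[\lambda]$ rather than through the primitive sixth-root-of-unity eigenvalues, which need not lie in $\mathbb{F}$.
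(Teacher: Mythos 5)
Your proof is correct. A point of comparison: the paper offers no proof of this lemma at all --- it is stated with the remark that it is ``rather straightforward, and so we omit its proof'' --- and the intended argument is presumably the bare block computation: every block of $N^2$ equals $\pm(T^2-T^2)=0$, while $A-N=\left(\begin{smallmatrix} T & T\\ -T & 0\end{smallmatrix}\right)$ satisfies $(A-N)^3=\left(\begin{smallmatrix} -T^3 & 0\\ 0 & -T^3\end{smallmatrix}\right)$, whence $(A-N)^6$ is as claimed. Your Kronecker-product formulation carries out exactly this computation but organizes it conceptually: the mixed-product rule reduces everything to the $2\times 2$ matrices $C$ and $B$, where $C^2=0$ (zero trace and determinant) and $B^6=I_2$ because the characteristic polynomial $\lambda^2-\lambda+1$ of $B$ divides $\lambda^6-1$ in $\mathbb{Z}[\lambda]$. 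What your route buys is an explanation of the otherwise unmotivated exponent $6$ (it is forced by the sixth cyclotomic polynomial), and your insistence on arguing via divisibility in $\mathbb{Z}[\lambda]$ rather than via primitive sixth roots of unity correctly makes the argument characteristic-free. One small inaccuracy, harmless to the proof: $6$ is not always the \emph{exact} multiplicative order of $B$, since in characteristic $2$ one has $B^3=-I_2=I_2$ and the order drops to $3$; as you only use $B^6=I_2$, nothing is affected.
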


The following result, which was proven in \cite{DGL5}, will also be very useful when combining blocks of $0$'s  with torsion companion matrices:

\begin{lemma}\label{teoremappal}\cite[Theorem 2.3]{DGL5}
Let $\mathbb{F}$ be a field, let $n,k\in \mathbb{N}$ with $k<n-k$, and consider the block matrix
$$
A=\left(
         \begin{array}{c|c}
           {\bf 0}_{k,k} & {\bf 0}_{k,n-k} \\
           \hline
           {\bf 0}_{n-k,k} & A_{22} \\
         \end{array}
       \right)\in \mathbb{M}_{n}(\mathbb{F})
$$
consisting of $k$ rows and columns of zeros, and of an invertible non-derogative matrix $A_{22}$. Then, for any monic polynomial $q(x)$ of degree $n$ whose trace coincides with the trace of $A$, there exists a square-zero matrix $N$ such that the characteristic polynomial of $A+N$ coincides exactly with $q(x)$.
\end{lemma}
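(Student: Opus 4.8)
The plan is to treat this as a constrained eigenvalue-assignment problem: realize an arbitrary monic $q$ of degree $n$ (with its $x^{n-1}$-coefficient pinned down) as the characteristic polynomial of $A+N$, where the perturbation $N$ is forced to satisfy the nonlinear condition $N^2=0$. The first thing to observe is that the trace hypothesis is the only conceivable obstruction: every square-zero matrix is nilpotent, hence has zero trace, so $\operatorname{tr}(A+N)=\operatorname{tr}(A)$ is automatic and fixes the coefficient of $x^{n-1}$ in every achievable characteristic polynomial. The whole content of the statement is that, under $k<n-k$ together with the non-derogatory hypothesis, this is the \emph{sole} constraint.

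First I would normalize $A_{22}$. Being non-derogative, it is cyclic and so similar to the companion matrix $C(p)$ of its characteristic polynomial $p(x)$ (of degree $n-k$, with $p(0)\neq 0$ since $A_{22}$ is invertible). Conjugating the whole matrix by $\operatorname{diag}(\Id_k,P)$ preserves the block shape of $A$, sends square-zero matrices to square-zero matrices, and leaves characteristic polynomials unchanged; hence I may assume $A_{22}=C(p)$ from the start. In this normal form $A$ carries a single long chain $e_{k+1}\mapsto e_{k+2}\mapsto\cdots\mapsto e_n$ on the bottom coordinates (closing up through the last column), while the top coordinates $e_1,\dots,e_k$ are annihilated and lie outside the image of $A$.

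The computational engine I would use is the low-rank determinant formula. Writing a rank-$r$ square-zero $N$ as $N=UV^{\top}$ with $U,V\in\mathbb{F}^{n\times r}$ and $r\le k$, the condition $N^2=0$ becomes exactly the clean matrix equation $V^{\top}U=0$, while for generic $x$
$$
\chi_{A+N}(x)=\det\!\big(x\Id_n-A-UV^{\top}\big)=\chi_A(x)\,\det\!\big(\Id_r-V^{\top}(x\Id_n-A)^{-1}U\big),
$$
with $\chi_A(x)=x^k p(x)$. Splitting $U=\left(\begin{smallmatrix}U_1\\ U_2\end{smallmatrix}\right)$, $V=\left(\begin{smallmatrix}V_1\\ V_2\end{smallmatrix}\right)$ into top/bottom blocks and using $(x\Id_n-A)^{-1}=\operatorname{diag}(x^{-1}\Id_k,(x\Id_{n-k}-A_{22})^{-1})$, the demand $\chi_{A+N}=q$ becomes a rational-interpolation problem: match $\det(\Id_r-x^{-1}V_1^{\top}U_1-V_2^{\top}(x\Id-A_{22})^{-1}U_2)$ to $q(x)/(x^kp(x))$ subject to $V_1^{\top}U_1+V_2^{\top}U_2=0$. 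Here the cyclicity of $A_{22}=C(p)$ is decisive: it lets $V_2^{\top}(x\Id-A_{22})^{-1}U_2$ realize any prescribed strictly proper rational expression (the controllability/observability freedom underlying pole assignment), so that all $n-1$ free coefficients of $q$ can be moved, and the strict inequality $k<n-k$ guarantees that enough parameters survive the $r^2$ scalar constraints $V^{\top}U=0$.

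The step I expect to be the genuine obstacle is reconciling surjectivity with the square-zero constraint. Each top coordinate must be both reached and moved by $N$ (since $A$ maps neither into nor out of it), so it carries both an incoming and an outgoing $N$-edge; this forces length-two paths in the support of $N$, and therefore $N^2=0$ \emph{cannot} be arranged by a disjoint-support combinatorial argument — it must exploit cancellation, exactly as in the construction $N=\left(\begin{smallmatrix}-T&-T\\ T&T\end{smallmatrix}\right)$ of Lemma~\ref{maximal}. I would accordingly close the argument not by a bare dimension count but by an explicit cancellation-based block construction, or by an induction on $k$ that peels off one zero row and column at a time via a rank-one or rank-two square-zero update, choosing the free entries so the coefficient-matching system becomes triangular and is solved coefficient by coefficient. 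The hypothesis $k<n-k$ is precisely what keeps this elimination from running out of room.
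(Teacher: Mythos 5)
First, a point of order: the paper does not prove this lemma at all --- it is quoted from \cite[Theorem 2.3]{DGL5} and used as a black box --- so your attempt cannot be compared against an internal argument; it has to stand on its own. On its own terms, it has a genuine gap. The setup is fine: the trace condition is indeed the only necessary constraint; reducing $A_{22}$ to companion form $C(p)$ by the block-diagonal similarity is legitimate; and writing $N=UV^{\top}$ with the equivalence $N^2=0\Leftrightarrow V^{\top}U=0$ (for full-column-rank $U,V$) together with
$$
\chi_{A+N}(x)=\chi_A(x)\det\bigl(\Id_r-V^{\top}(x\Id_n-A)^{-1}U\bigr)
$$
is a correct reformulation. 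But from that point on nothing is actually established. The assertion that cyclicity of $A_{22}$ lets $V_2^{\top}(x\Id-A_{22})^{-1}U_2$ ``realize any prescribed strictly proper rational expression'' is precisely the crux, and it does not follow from pole-assignment folklore: the image of the \emph{bilinear} map $(U_2,V_2)\mapsto V_2^{\top}\operatorname{adj}(x\Id-A_{22})U_2$ is not a linear space when $r>1$, and you must hit the target \emph{while} satisfying the coupled quadratic system $V_1^{\top}U_1+V_2^{\top}U_2=0$. Your final paragraph concedes exactly this and then offers two candidate strategies (``an explicit cancellation-based block construction, or \dots an induction on $k$'') without carrying either one out; a plan that ends by naming the hard step does not prove it. Note also that ``enough parameters survive the $r^2$ scalar constraints'' is the bare dimension count you yourself disavow, and over an arbitrary (possibly finite) field genericity and parameter-counting arguments are unavailable in principle.

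To see both that your framework is workable and where the real difficulty sits, close the case $k=r=1$ explicitly: with $A_{22}=C(p)$ of size $m=n-1$ and $N=uv^{\top}$, the identity above reads $q(x)-xp(x)=-p(x)v_1u_1-x\,v_2^{\top}\operatorname{adj}(x\Id-C(p))u_2$. Evaluating at $x=0$ (where $p(0)\neq 0$ by invertibility) pins down $v_1u_1$; then, since the last row of $\operatorname{adj}(x\Id-C(p))$ is $(1,x,\dots,x^{m-1})$, taking $v_2=e_m$ and letting $u_2$ be the coefficient vector of the remaining polynomial solves the interpolation, and the constraint $v_1u_1+v_2^{\top}u_2=0$ turns out to hold automatically --- it is exactly the matching of the $x^{n-1}$ coefficients forced by the trace hypothesis. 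That computation is a proof for $k=1$. What \cite{DGL5} supplies, and what is missing from your proposal, is the analogous construction for all $k<n-k$ at once, where $N$ must have rank up to $k$ and $V^{\top}U=0$ is a system of $k^2$ coupled equations interacting with the determinant identity; until that construction (or a correct induction replacing it) is written down, the statement is not proved.
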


Suppose now that $\mathbb{F}=\mathbb{Q}$, the field of all rational numbers, and let us deal with a block matrix consisting of some blocks corresponding to elementary divisors of the form $x$ and the companion matrix associated to a cyclotomic polynomial of degree $\ge 2$. Recall that the trace of every cyclotomic polynomial in $\mathbb{Q}[x]$ is always $0$, $1$ or $-1$ (just by considering the  M\"{o}bius function). Thus, the decomposition of such matrices into torsion and square-zero is a direct consequence of Lemma \ref{teoremappal} (in fact, we only need to choose the appropriate characteristic polynomial according to the trace of the cyclotomic polynomial). Specifically, the following technicality is valid:

\begin{lemma}\label{cyclotomic}
If $p(x)\in \mathbb{Q}[x]$ is a cyclotomic polynomial of degree $n-k\ge 2$, $k<n-k$, and
$$
A=\left(
  \begin{array}{c|c}
    0 & 0\dots 0 \\
    \hline
     0 & \\
    \vdots & C(p(x))\\
     0 &
  \end{array}
\right)\in \mathbb{M}_{n}(\mathbb{F}),
$$
then:
\begin{itemize}
\item[(i)] if the trace of $p(x)$ is $0$, there exists a square-zero matrix $N$ such that the characteristic polynomial of $A+N$ is $x^{n}-1$; in particular, $(A+N)^{n}={\rm Id}$;
\item[(ii)] if the trace of $p(x)$ is $1$, there exists a square-zero matrix $N$ such that the characteristic polynomial of $A+N$ is $x^{n}-x^{n-1}+x^{n-2}-\dots+(-1)^n$; in particular, if $n$ is even, we have $(A+N)^{2(n+1)}={\rm Id}$, and if $n$ is odd, we have $(A+N)^{n+1}={\rm Id}$;
\item[(iii)] if the trace of $p(x)$ is $-1$, there exists a square-zero matrix $N$ such that the characteristic polynomial of $A+N$ is $x^{n}+x^{n-1}+x^{n-2}+\dots+1$; in particular, $(A+N)^{n+1}={\rm Id}$.
\end{itemize}
\end{lemma}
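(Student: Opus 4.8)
The plan is to derive all three decompositions as applications of Lemma~\ref{teoremappal}, where in each part one simply selects a target characteristic polynomial whose trace matches that of $A$. First I would verify the hypotheses of Lemma~\ref{teoremappal}. The block $A_{22}=C(p(x))$ has size $n-k\ge 2$ and $k<n-k$ holds by assumption; moreover $C(p(x))$ is non-derogative, since a companion matrix always has equal minimal and characteristic polynomials, and it is invertible, because a cyclotomic polynomial of degree $\ge 2$ has all its roots among the primitive roots of unity, whence $p(0)\ne 0$. Next I would record that the trace of $A$ equals the trace of $C(p(x))$, namely minus the coefficient of $x^{n-k-1}$ in $p(x)$, which is precisely the (root-sum) trace of $p(x)$ and therefore lies in $\{0,1,-1\}$ by the M\"obius-function computation recalled above.

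With this in place, each part reduces to choosing the correct $q(x)$. In case (i) I take $q(x)=x^{n}-1$, whose coefficient of $x^{n-1}$ is $0$, so its trace is $0$ and agrees with that of $A$; Lemma~\ref{teoremappal} then supplies a square-zero $N$ with $\chi_{A+N}=x^{n}-1$. For (ii) I take $q(x)=x^{n}-x^{n-1}+\dots+(-1)^n$, whose coefficient of $x^{n-1}$ is $-1$ and hence whose trace is $1$; for (iii) I take $q(x)=x^{n}+x^{n-1}+\dots+1$, whose coefficient of $x^{n-1}$ is $1$ and hence whose trace is $-1$. In every case the trace of $q$ matches the prescribed trace of $p$, so Lemma~\ref{teoremappal} applies and produces the desired square-zero perturbation.

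Finally I would read off the stated orders by factoring each $q$ as a divisor of a suitable $x^{m}-1$ and using that the minimal polynomial of $A+N$ divides $\chi_{A+N}=q$. For (i), $x^{n}-1$ divides $x^{n}-1$, giving $(A+N)^{n}={\rm Id}$. For (iii), $q(x)=\frac{x^{n+1}-1}{x-1}$ divides $x^{n+1}-1$, giving $(A+N)^{n+1}={\rm Id}$. For (ii) the argument splits on the parity of $n$: when $n$ is odd, $(x+1)q(x)=x^{n+1}-1$, so $q$ divides $x^{n+1}-1$ and $(A+N)^{n+1}={\rm Id}$; when $n$ is even, $(x+1)q(x)=x^{n+1}+1$, so $q$ divides $x^{n+1}+1$, which in turn divides $x^{2(n+1)}-1=(x^{n+1}-1)(x^{n+1}+1)$, whence $(A+N)^{2(n+1)}={\rm Id}$. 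In each instance the minimal polynomial of $A+N$ divides $q$, hence divides the relevant $x^{m}-1$, forcing $(A+N)^{m}={\rm Id}$.

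I do not anticipate a genuine obstacle here: the substantive content is entirely absorbed into Lemma~\ref{teoremappal}, and what remains is the trace bookkeeping together with the elementary factorizations above. The only point demanding a little care is the parity split in (ii), where the two sub-cases correspond exactly to whether $x+1$ divides $x^{n+1}-1$ or $x^{n+1}+1$, and this is what accounts for the different exponents $n+1$ and $2(n+1)$ in the conclusion.
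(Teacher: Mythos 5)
Your proposal is correct and follows exactly the route the paper takes: the paper derives this lemma as a direct consequence of Lemma~\ref{teoremappal}, choosing the target characteristic polynomial according to the trace of the cyclotomic polynomial, which is precisely your argument (the paper merely leaves the hypothesis-checking, trace bookkeeping, and the factorizations $q(x)\mid x^{m}-1$ implicit). Your parity analysis in case (ii), via $(x+1)q(x)=x^{n+1}+(-1)^{n}$, correctly accounts for the exponents $n+1$ and $2(n+1)$ in the statement.
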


Now, we are able to prove the main result of this paper which generalizes our recent result in \cite{DGL3} (compare with \cite{DGL4} as well) and gives a satisfactory necessary and sufficient condition for a square matrix over certain fields to be a sum of a torsion matrix and a nilpotent matrix of the smallest index of nilpotence $2$.

\begin{theorem}\label{tmappalperidic} Let $\mathbb{F}$ be either a field of prime characteristic, or $\mathbb{F}=\mathbb{Q}$, or an algebraically closed field of characteristic zero. Then, every periodic matrix $A$ over $\mathbb{F}$ is decomposable as the sum of a torsion matrix and a square-zero matrix if, and only if, the rank of $A$ is at least $\frac{n}{2}$.
\end{theorem}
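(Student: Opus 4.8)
The plan is to prove both implications by reducing everything to the canonical block form of Remark~\ref{periodicdescription}. Since similarity preserves both the torsion and the square-zero properties (if $PAP^{-1}=S'+N'$ then $A=P^{-1}S'P+P^{-1}N'P$ with the summands again torsion and square-zero), I may assume $A$ is the block-diagonal matrix consisting of $a$ zero blocks of size $1$ (elementary divisors $x$), nilpotent Jordan blocks $N_{k_1},\dots,N_{k_s}$ with each $k_i\ge 2$, and an invertible torsion block $T$ of size $t$. Counting sizes gives $n=a+\sum_i k_i+t$, while $\mathrm{rank}(A)=\sum_i(k_i-1)+t$ because each $N_{k_i}$ has rank $k_i-1$ and $T$ is invertible. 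A direct manipulation then shows that the hypothesis $\mathrm{rank}(A)\ge \frac n2$ is \emph{equivalent} to the capacity inequality
\begin{equation}\label{eq:capacity}
a\;\le\;\sum_{i=1}^s (k_i-2)+t .
\end{equation}

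For necessity (the ``only if'' part), suppose $A=S+N$ with $S$ torsion and $N$ square-zero. Then $S$ is invertible, so $\mathrm{rank}(S)=n$, whereas $N^2=0$ forces $\operatorname{Im}N\subseteq\Ker N$ and hence $\mathrm{rank}(N)\le \frac n2$. Subadditivity of rank yields $n=\mathrm{rank}(S)=\mathrm{rank}(A-N)\le \mathrm{rank}(A)+\mathrm{rank}(N)\le \mathrm{rank}(A)+\frac n2$, so $\mathrm{rank}(A)\ge \frac n2$. This argument uses nothing about the field.

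For sufficiency, I would read \eqref{eq:capacity} as a budget: each nilpotent block $N_{k_i}$ can ``absorb'' up to $k_i-2$ of the zero blocks by Lemma~\ref{nilpotent}, and $T$ can absorb up to $t$ of them. Concretely, after permuting the blocks I assign $\min\!\big(a,\sum_i(k_i-2)\big)$ zero blocks to the nilpotent part (at most $k_i-2$ to each $N_{k_i}$, invoking Lemma~\ref{nilpotent}), leaving a remainder $c=\max\!\big(0,\,a-\sum_i(k_i-2)\big)$ which, by \eqref{eq:capacity}, satisfies $0\le c\le t$. Since a direct sum of torsion matrices is torsion and a direct sum of square-zero matrices is square-zero, it then suffices to absorb exactly $c$ zero blocks into $T$ and reassemble. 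The cases $c=0$ (nothing to do) and $c=t$ (apply Lemma~\ref{maximal}) are immediate and field-independent.

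The crux, and where the three field hypotheses enter, is absorbing an \emph{intermediate} number $0<c<t$ into $T$; here I would split $T$ into its companion/primary constituents and treat each according to the field. Over an algebraically closed field of characteristic zero, $T$ is diagonalizable with roots of unity on the diagonal, so $c$ of these $1\times1$ torsion entries each swallow one zero block via Lemma~\ref{maximal}. Over $\mathbb{Q}$, $T$ is a direct sum of companion matrices of cyclotomic polynomials, and Lemma~\ref{cyclotomic}---whose trace hypotheses $0,\pm1$ are exactly the possible traces of cyclotomic polynomials---lets a degree-$e$ block absorb any number from $0$ to $e-1$, with $e$ itself covered by Lemma~\ref{maximal}. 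Over a field of prime characteristic $p$, $T$ is a direct sum of non-derogative companion blocks $C(g^b)$, to which Lemma~\ref{teoremappal} applies once I exhibit, for the prescribed size and trace, a monic \emph{separable} target polynomial with nonzero constant term; its roots are then automatically roots of unity of order prime to $p$ (every nonzero element of $\overline{\mathbb{F}_p}$ has such order), so $A+N$ is diagonalizable with root-of-unity eigenvalues and hence torsion. In every case the attainable totals sweep out all of $0,1,\dots,t$, so the required $c$ is reachable. I expect the prime-characteristic step---producing a separable, trace-prescribed polynomial of the correct degree (which here is always $\ge 3$, avoiding the degenerate low-degree cases)---to be the main technical obstacle; and it is precisely the analogue of this step that breaks down over $\mathbb{R}$, where the irreducible factors of $x^N-1$ carry irrational traces $2\cos\theta$ that no torsion polynomial can match, which is the source of the promised counterexample.
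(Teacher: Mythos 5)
Your overall architecture---the similarity reduction, the necessity argument via rank subadditivity, the capacity inequality $a\le\sum_i(k_i-2)+t$, and the handling of the algebraically closed and rational cases via Lemmas~\ref{nilpotent}, \ref{maximal} and \ref{cyclotomic}---coincides with the paper's proof (its cases (a), (b.1), (b.2)); making the capacity count explicit is in fact cleaner than the paper's implicit ``we can distribute the $0$-blocks.'' The genuine gap is in the prime-characteristic case, precisely the step you yourself flag as an expectation rather than prove: for each companion block $C(g^b)$ of size $e$ and trace $\tau$ followed by $c'$ zero blocks with $1\le c'<e$, you need a monic separable polynomial of degree $e+c'$ with trace $\tau$, nonzero constant term, \emph{and with coefficients algebraic over $\mathbb{F}_p$}. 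That last condition is essential and absent from your formulation: over a general field of characteristic $p$, say $\mathbb{F}_p(t)$, separability and a nonzero constant term do not force the roots to be roots of unity (consider $(x-1)(x-t)$); roots of unity are guaranteed only when the coefficients lie in $\mathbb{F}\cap\overline{\mathbb{F}_p}$. Since $\tau$ is a sum of roots of unity it does lie in a finite subfield $\mathbb{F}_p(\tau)\subseteq\mathbb{F}$, so what your plan actually requires is: over any finite field, for any degree $d\ge 3$ and any prescribed trace, there exists a monic separable polynomial of that degree and trace with nonzero constant term. This is true (it follows, for instance, from Carlitz's count of monic irreducible polynomials with prescribed trace over $\mathbb{F}_q$), but it is a nontrivial arithmetic input, not a routine verification, and your proof does not supply it; as written, one third of the theorem is unproved.

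The paper sidesteps this construction entirely, and its route is worth comparing. In its case (b.3) it does not split $T$ into companion blocks at all: it groups the whole torsion block $T$ (of size $t$) together with the leftover $r\le t$ zero blocks into a single matrix $T_1$, observes that the minimal polynomial of $T_1$ divides $x^r(x^{m-n}-1)\in\mathbb{F}_p[x]$, hence the characteristic polynomial of $T_1$ has coefficients algebraic over $\mathbb{F}_p$, and then invokes \cite[Theorem 1.8]{DGL3}, which decomposes any such matrix of rank at least half its size (here $T_1$ has rank $t\ge\frac{t+r}{2}$) into a torsion matrix plus a square-zero matrix. To complete your argument you should therefore either prove the finite-field existence statement above, or replace your block-by-block treatment of the prime-characteristic case by this single application of \cite[Theorem 1.8]{DGL3}.
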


\begin{proof}
Since all torsion matrices have full rank, and the rank of a square-zero matrix is at most $\frac n2$, the necessary condition is quite obvious.

Suppose now that $A^m=A^n$ for some integers $m>n\ge 1$ and that the rank of $A$ is at least $\frac n2$; following Remark \ref{periodicdescription} stated above, the matrix $A$ is similar to a matrix of the form
$$
\left(
  \begin{array}{c|c|c|c|c}
    0& 0&0&0&0\\
    \hline
    0& N_{k_1}&0&0 & 0 \\
    \hline
 0& 0&\ddots&0 & 0\\
    \hline
     0& 0&0&N_{k_s} & 0\\
    \hline
  0&0& 0& 0 & T \\
  \end{array}
\right);
$$
and it must be that: (1) when $\mathbb{F}=\mathbb{Q}$, the elementary divisors which are factors of $x^{m-n}-1$ are (different) cyclotomic polynomials; (2) when $\mathbb{F}$ is algebraically closed of characteristic zero, all of the factors of $x^{m-n}-1$ are (different) degree-one polynomials of the form $x-\lambda$, where $\lambda$ is a root of the unity.

Furthermore, since the rank of $A$ is greater than or equal to $\frac n2$, we can distribute the 0-blocks of order one in the following way:
\begin{itemize}
\item[(a)] follow each $N_{k_i}$ of size $k_i\ge 2$ by $s_i$-blocks corresponding to elementary divisors of the form $x$,     where $0\le s_i\le k_i-2$; then, decompose $N_{k_i}$ and the following $s_i$ Jordan blocks of the form $(0)$ into a     torsion matrix and a square-zero matrix applying Lemma~\ref{nilpotent}.

\medskip

\noindent Depending on which field we are dealing with, we differ the next three cases:

\medskip

\item[(b.1)] when $\mathbb{F}$ is an algebraically closed field, follow each companion matrix associated to $x-\lambda$ ($\lambda$ being a root of the unity) by $0\le s\le 1$ blocks corresponding to elementary divisors of the form $x$; when $s=1$ decompose into a torsion matrix and a square-zero matrix employing Lemma~\ref{maximal}, and when $s=0$ just take $N=(0)$;

\item[(b.2)] when $\mathbb{F}=\mathbb{Q}$, follow each companion matrix of size $m_i$ in the main diagonal of $T$
    (associated to a cyclotomic polynomial) by $t_i$-blocks corresponding to elementary divisors of the form $x$, where $0\le t_i\le m_i$; when $t_i=m_i$ decompose into a torsion matrix and a square-zero matrix utilizing
    Lemma~\ref{maximal}, and when $t_i<m_i$ use Lemma~\ref{cyclotomic} to decompose into a torsion matrix and a square-zero matrix;

\item[(b.3)] when the characteristic of $\mathbb{F}$ is a prime $p$, suppose that the whole block $T$ has size $t\ge 0$ and follow it by $r$-blocks corresponding to elementary divisors of the form $x$, where $0\le r\le t$, and let us call it $T_1$; since the minimal polynomial of $T_1$ is algebraic over the prime field $\mathbb{F}_p$, because it divides $x^r(x^{m-n}-1)$, the characteristic polynomial of $T_1$ is also algebraic over $\mathbb{F}_p$ and, therefore, we can exploit \cite[Theorem 1.8]{DGL3} to decompose $T_1$ as the sum of a torsion matrix and a square-zero matrix.
\end{itemize}
This finishes the proof after all.
\end{proof}

The next construction shows that the above result is {\it not} true in general for arbitrary fields.

\begin{remark}\label{couterexamplechar0}
When dealing with non-algebraically closed fields of characteristic zero other than $\mathbb{Q}$, e.g., the field $\mathbb{R}$ consisting of all real numbers, the decomposition of a periodic matrix into a torsion matrix and a square-zero matrix does {\it not} always hold. For instance, if we consider the matrix
$$
A=\left(
    \begin{array}{c|cc}
      0 & 0  &0 \\
      \hline
      0 & 0 &-1 \\
      0 & 1 &-\sqrt{2} \\
    \end{array}
  \right)\in \mathbb{M}_3(\mathbb{R}),
$$
we can easily check that $A$ is periodic, because $A^9=A$ holds; nevertheless, there does {\it not} exist a square-zero matrix $N\in \mathbb{M}_3(\mathbb{R})$ such that $A+N$ is torsion: indeed, otherwise, the characteristic polynomial of $A+N$ would be of the form $$p(x)=x^3+\sqrt{2}x^2+ax+b\in \mathbb{R}[x]$$ for some $a,b\in \mathbb{R}$ and, moreover, such polynomial should divide $x^n-1$ for some $n\ge 3$; its three roots $\alpha_1,\alpha_2,\alpha_3\in \mathbb{C}$ should be $n^{\rm th}$-roots of the unity (in fact, one of them, say $\alpha_1$, a real number, hence $\alpha_1=\pm 1$, and the other two, say $\alpha_2,\alpha_3$, complex conjugate roots of unity) with $\alpha_1+\alpha_2+\alpha_3=-\sqrt{2}$. Since the sum of the two roots of unity $\alpha_2$ and $\alpha_3$ cannot be smaller that $ -2$, one deduces that $\alpha_1=-1$ and $\alpha_2+\alpha_3=-\sqrt{2}+1$, and this automatically gives that $$p(x)=(x+1)(x^2+(\sqrt{2}-1)x+1).$$ Next, solving the
degree $2$ equation $x^2+(\sqrt{2}-1)x+1=0$ in $\mathbb{C}$ directly leads us to
$$\alpha_2,\alpha_3=\frac{1}{2}\left(1-\sqrt{2}\pm\sqrt{-1-\sqrt{2}}\right)\in \mathbb{C}.$$ However, the minimal polynomial of $\alpha_2$ (and also of $\alpha_3$) over $\mathbb{Q}$ is $x^4-2x^3+x^2-2x+1$, which is definitely {\it not} cyclotomic because its trace equals $2$, thus proving that $\alpha_2,\alpha_3$ cannot be roots of unity, a contradiction. This ends the example and our considerations.
\end{remark}

The logical reason why the matrix $A$ in the above counterexample cannot be decomposed into the sum of a torsion matrix and a square-zero matrix relies on the fact that there are {\it no} polynomials in $\mathbb{R}[x]$ whose trace is $\sqrt{2}-1$ and which divide the polynomial $x^r-1$ for some $r\in\mathbb{N}$.

Nevertheless, when $A$ is of the special form
$$
A=\left(
    \begin{array}{c|c}
      {\bf 0}_{k,k} & {\bf 0}_{k,n-k} \\
      \hline
      {\bf 0}_{n-k,k} & C(p(x)) \\
    \end{array}
  \right)\in \mathbb{M}_n(\mathbb{R})
$$
with $k\ge 2$ and $C(p(x))$ is a torsion companion matrix of an irreducible polynomial, then there will exist some
polynomial $a(x)\in\{x^k-1,x^k+1\}$ such that the resulting polynomial $p(x)a(x)$ has the same trace as $p(x)$ and divides $x^r-1$ for some $r\in \mathbb{N}$. Precisely, we offer to establish the following:

\begin{proposition}\label{cyclotomic2}
Let $\mathbb{F}$ be a field of characteristic zero, and let $A$ be a matrix of the form
$$
A=\left(
    \begin{array}{c|c}
      {\bf 0}_{k,k} & {\bf 0}_{k,n-k} \\
      \hline
      {\bf 0}_{n-k,k} & C(p(x)) \\
    \end{array}
  \right)\in \mathbb{M}_n(\mathbb{R})
$$
with $k\ge 2$ and $C(p(x))$ is a torsion companion matrix of an irreducible polynomial $p(x)$ such that $(C(p(x)))^r={\rm Id}$ for some $r\in \mathbb{N}$. Then,
\begin{itemize}
\item[(i)] if the greatest common divisor of $p(x)$ and $x^k-1$ is $1$, there exists a square-zero matrix $N$ such that the characteristic polynomial of $A+N$ is $p(x)(x^k-1)$; in this case, $A+N$ is a torsion matrix with $(A+N)^{m}={\rm Id}$, where $m$ is the least common multiple of $k$ and $r$.
\item[(ii)] if the greatest common divisor of $p(x)$ and $x^k+1$ is $1$, there exists a square-zero matrix $N$ such that the characteristic polynomial of $A+N$ is $p(x)(x^k+1)$; in this case, $A+N$ is a torsion matrix with $(A+N)^{m}={\rm Id}$, where $m$ is the least common multiple of $2k$ and $r$.
\end{itemize}
\end{proposition}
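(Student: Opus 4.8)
The plan is to reduce both parts to a single invocation of Lemma~\ref{teoremappal}: the idea is that multiplying $p(x)$ by a factor $a(x)\in\{x^k-1,\,x^k+1\}$ produces a monic polynomial of degree $n$ that (a) has the same trace as $p(x)$, hence as $A$, and (b) divides $x^m-1$ for a suitable $m$, so that it can serve as the characteristic polynomial of a torsion matrix. Concretely, in case (i) I would take $q(x):=p(x)(x^k-1)$ and in case (ii) $q(x):=p(x)(x^k+1)$; in either case $\deg q=(n-k)+k=n$. Writing $p(x)=x^{n-k}+c\,x^{n-k-1}+\cdots$, the hypothesis $k\ge 2$ guarantees that $a(x)$ has no term of degree $k-1$, so the coefficient of $x^{n-1}$ in $q(x)$ is again $c$; thus the trace of $q$ equals the trace of $p$, which is the trace of $A$ because the zero block contributes nothing.

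First I would verify the hypotheses of Lemma~\ref{teoremappal} for $A$ and the target $q$. The block $C(p(x))$ is a companion matrix, so it is non-derogatory, and it is invertible because $p(x)\mid x^r-1$ forces $p(0)\ne 0$, whence $\det C(p(x))$ is, up to sign, the nonzero constant term of $p$. Combining this with the trace identity above and the size condition $k<n-k$, Lemma~\ref{teoremappal} then supplies a square-zero matrix $N$ whose sum with $A$ has characteristic polynomial exactly $q(x)$.

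It remains to check that $A+N$ is torsion and to pin down its order, and this is where the coprimality hypothesis is essential. In case (i), since $\gcd(p(x),x^k-1)=1$ and both factors are separable (being divisors of $x^r-1$ and of $x^k-1$, with $\operatorname{char}\mathbb{F}=0$), the product $q(x)$ is itself separable; moreover every root of $q$ is an $m$-th root of unity for $m:=\operatorname{lcm}(k,r)$, because $k\mid m$ gives $(x^k-1)\mid(x^m-1)$ and $r\mid m$ gives $p(x)\mid(x^r-1)\mid(x^m-1)$. Hence $q(x)\mid x^m-1$, so the minimal polynomial of $A+N$, being a divisor of $q$, also divides $x^m-1$ and $(A+N)^m=\mathrm{Id}$. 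Case (ii) is identical once $x^k-1$ is replaced by $x^k+1$ and one uses $(x^k+1)\mid(x^{2k}-1)$, which makes $2k$ (rather than $k$) the relevant modulus and yields $m=\operatorname{lcm}(2k,r)$.

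Once the two hypotheses are in place the trace bookkeeping and the divisibility chains are routine; the one point that genuinely has to be watched is the applicability of Lemma~\ref{teoremappal}, which requires the zero block to be the strictly smaller one, i.e.\ $k<n-k$. This is automatic in the only range where the statement can hold: a torsion-plus-square-zero decomposition forces $\operatorname{rank}A=\deg p=n-k\ge n/2$, hence $k\le n-k$, so the strict inequality fails only in the balanced case $k=n-k$, which lies outside the scope of Lemma~\ref{teoremappal} and would have to be handled separately by the construction of Lemma~\ref{maximal}. I would therefore fix the coprimality hypotheses and the choice of $a(x)$ at the outset and carry out the separability-and-divisibility argument---the real content beyond the citation of Lemma~\ref{teoremappal}---once, applying it verbatim to both cases.
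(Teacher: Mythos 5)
Your proposal is correct and takes essentially the same route as the paper: form $q(x)=p(x)(x^k\mp 1)$, note that $k\ge 2$ keeps the trace equal to that of $A$, and invoke Lemma~\ref{teoremappal} to realize $q(x)$ as the characteristic polynomial of $A+N$ (the paper's proof actually cites Lemma~\ref{nilpotent} at this step, which is plainly a slip for Lemma~\ref{teoremappal}). You are in fact more careful than the paper on two points it passes over in silence: the explicit verification that $q(x)\mid x^m-1$, which is what justifies the ``in particular'' torsion claims, and the observation that Lemma~\ref{teoremappal} requires $k<n-k$, so the balanced case $k=n-k$ is not covered by this argument (and Lemma~\ref{maximal} would only give a torsion plus square-zero decomposition there, not the prescribed characteristic polynomial).
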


\begin{proof}
One observes that the polynomial $p(x)$ is either co-prime with $x^k-1$ or with $x^k+1$, because it is irreducible. But, since $k\ge 2$, the trace of the polynomials $p(x)(x^k-1)$ and $p(x)(x^k+1)$ coincides with the trace of $p(x)$, and we can just take into account Lemma \ref{nilpotent} to obtain the square-zero matrices $N$ such that $A+N$ have the desired characteristic polynomials.
\end{proof}

We close our work with the following challenging problem:

\medskip

\noindent{\bf Problem:} Find a criterion when a square matrix over an arbitrary field is a sum of a torsion matrix, an idempotent matrix and a square-zero matrix.

\medskip
\medskip
\medskip

\noindent{\bf Funding:} The first-named author (Peter Danchev) was supported in part by the BIDEB 2221 of T\"UB\'ITAK; the second and third-named authors (Esther Garc\'{\i}a and Miguel G\'omez Lozano) were partially supported by Ayuda Puente 2023, URJC, and MTM2017-84194-P (AEI/FEDER, UE). The all three authors were partially supported by the Junta de Andaluc\'{\i}a FQM 264.

\vskip3.0pc

\bibliographystyle{plain}

\end{document}